\documentclass[12pt]{amsart}

\usepackage[utf8]{inputenc}
\usepackage{pdfsync}
\usepackage[margin=1 in]{geometry}

\usepackage[sort&compress,square,numbers]{natbib}
\usepackage{graphicx,amssymb,amstext,amsmath,wrapfig,url,bm}
\usepackage[colorlinks]{hyperref}
\usepackage[hang]{subfigure}
\usepackage{pxfonts}

\newcommand{\card}[1]{\ensuremath{\left\vert #1 \right\vert}}

\renewcommand{\vec}[1]{\mathbf{#1}}

\def\complaint#1{}
\def\withcomplaints{

\newcounter{mycomplaints}
\def\complaint##1{\refstepcounter{mycomplaints}%
\ifhmode%
\unskip%
{\dimen1=\baselineskip \divide\dimen1 by 2 %
\raise\dimen1\llap{\tiny -\themycomplaints-}}\fi%
\marginpar{\tiny [\themycomplaints]: ##1}}%
}

\def\lemmaD#1{
\begin{lemma}
\label{lem:#1}
}

\newcommand{\theoremD}[1]{
\begin{theorem}
\label{theorem:#1}
}

\newcommand{\factD}[1]{
\begin{fact}
\label{fact:#1}
}

\newcommand{\corD}[1]{
\begin{cor}
\label{cor:#1}
}

\usepackage{amsmath,amsthm}

\usepackage{theomac}

\newcommand{\lemlab}[1]{\label{lemma:#1}}
\newcommand{\theolab}[1]{\label{theo:#1}}

\newcommand{\eqlab}[1]{\label{eq:#1}}

\newcommand{\lemref}[1]{Lemma \ref{lemma:#1}}

\newcommand{\theoref}[1]{Theorem \ref{theo:#1}}

\renewcommand{\eqref}[1]{(\ref{eq:#1})}

\newtheoremWithMacro{theorem}{Theorem}

\newtheorem{lemma}[theorem]{Lemma}
\newtheoremWithMacro{prop}[theorem]{Proposition}
\newtheorem{cor}[theorem]{Corollary}

\begin{document}

\title{Lines induced by bichromatic point sets}
\author{Louis Theran}
\address{Department of Mathematics \\
Temple University}
\email{theran@temple.edu}
\urladdr{http://math.temple.edu/~theran/}

\begin{abstract}
An important theorem of Beck says that any point set in the Euclidean plane is either ``nearly general position'' or
``nearly collinear'': there is a constant $C>0$ such that, given $n$ points in $\mathbb{E}^2$ with at most $r$
of them collinear, the number of lines induced by the points is at least $Cr(n-r)$.

Recent work of Gutkin-Rams on billiards orbits requires the following elaboration of Beck's Theorem
to bichromatic point sets: there is a constant $C>0$ such that, given $n$ red points and $n$ blue points
in $\mathbb{E}^2$ with at most $r$ of them collinear, the number of lines spanning at least one point of each
color is at least $Cr(2n-r)$.
\end{abstract}

\maketitle

\section{Introduction}
Let $\vec p$ be a set of $n$ points in the Euclidean plane $\mathbb{E}^2$
and let $\mathcal{L}(\vec p)$ be the set of lines induced by $\vec p$.  A
line $\ell\in \mathcal{L}(\vec p)$ is $k$-rich if it is incident on
at least $k$ points of $\vec p$.  A well-known theorem of Beck relates
the size of $\mathcal{L}(\vec p)$ and the maximum richness.
\begin{theorem}[\beckthm][{\bf Beck's Induced Lines Theorem \cite{B83}}]\theolab{beck}
Let $\vec p$ be a set of $n$ points in $\mathbb{E}^2$, and let $r$ be the maximum richness
of any line in $\mathcal{L}(\vec p)$.  Then $\card{\mathcal{L}(\vec p)}\gg r(n-r)$.
\end{theorem}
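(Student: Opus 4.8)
The plan is to derive Beck's bound from the Szemer\'edi--Trotter incidence theorem, splitting into two regimes according to whether the maximum richness $r$ is bounded or comparable to $n$. Write $t_j$ for the number of lines of $\mathcal{L}(\vec p)$ incident on exactly $j$ points, so that $\sum_{j\ge 2}\binom{j}{2}t_j=\binom{n}{2}$ (every pair of points spans a unique line) and $\card{\mathcal{L}(\vec p)}=\sum_{j\ge 2}t_j$. The single external input is Szemer\'edi--Trotter in dual form: the number of lines incident on at least $s$ points is $O(n^2/s^3+n/s)$. Summing this dyadically over richness levels $2^i\in[k,r]$ — and using that no line carries more than $r$ points, which terminates the increasing part of the resulting geometric series at $2^i\approx r$ — gives the pair estimate $\sum_{j\ge k}\binom{j}{2}t_j \le C_1\bigl(n^2/k + nr\bigr)$ for an absolute constant $C_1$.

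In the first regime, $r\le n/(8C_1)$, I would choose the threshold to be the constant $k=8C_1$. Then the pair estimate bounds the number of point-pairs lying on lines of richness at least $k$ by $C_1(n^2/k+nr)\le \tfrac14 n^2\le \tfrac12\binom n2$, so at least half of all $\binom n2$ pairs lie on lines of richness below $k$. Each such line carries fewer than $\binom{k}{2}$ pairs, whence $\card{\mathcal{L}(\vec p)}\gg n^2$; since $r(n-r)\le rn\le n^2/(8C_1)$ throughout this regime, this already gives $\card{\mathcal{L}(\vec p)}\gg r(n-r)$.

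In the complementary regime $r> n/(8C_1)$ (so $r=\Theta(n)$) I would argue directly from the richest line $\ell^{\ast}$, which by definition carries exactly $r$ points; let $Q$ be the set of $n-r$ points off $\ell^{\ast}$. Every pair $(p,q)$ with $p\in\ell^{\ast}$ and $q\in Q$ spans a ``connecting'' line $\neq\ell^{\ast}$ meeting $\ell^{\ast}$ in the single point $p$. Writing $a_\ell=\card{\ell\cap Q}$ for a connecting line $\ell$, counting these pairs two ways gives $\sum_\ell a_\ell=r(n-r)$, while $\sum_\ell\binom{a_\ell}{2}\le\binom{n-r}{2}$ because each pair of points of $Q$ lies on at most one connecting line. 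Cauchy--Schwarz then yields
\[
\#\{\text{connecting lines}\}\ \ge\ \frac{\bigl(\sum_\ell a_\ell\bigr)^2}{\sum_\ell a_\ell^2}\ =\ \frac{r^2(n-r)^2}{r(n-r)+2\sum_\ell\binom{a_\ell}{2}}\ \ge\ \frac{r^2(n-r)^2}{(n-r)n}\ =\ \frac{r^2(n-r)}{n},
\]
and since $r>n/(8C_1)$ we have $r^2/n> r/(8C_1)$, so this is $\gg r(n-r)$, as required. Note that these two regimes partition all values of $r$ at the same threshold $n/(8C_1)$, so together they cover every case.

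The genuine technical work lies in the dyadic Szemer\'edi--Trotter computation of the first paragraph and in tuning the richness threshold so that the constants in the two regimes match up at the single cutoff $n/(8C_1)$; the rest is elementary double counting and one application of Cauchy--Schwarz. The main obstacle is therefore not any one hard estimate but getting this bookkeeping exactly right — in particular, verifying that the crude bound $\sum_\ell\binom{a_\ell}{2}\le\binom{n-r}{2}$ suffices precisely when $r=\Theta(n)$, which is exactly the range the first regime does not reach, so that neither tool is asked to do more than it can.
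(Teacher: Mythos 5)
Your argument is correct and follows essentially the architecture the paper itself uses: the paper does not reprove Beck's theorem (it cites \cite{B83}), but its proof of the bichromatic \theoref{main} is built exactly as you build yours --- your dyadic Szemer\'edi--Trotter pair estimate is \lemref{badpairs}, your first regime is Case I of \lemref{extremes}, and your second regime is alternative \textbf{B} together with the final count. Two small remarks. First, a constant slip: $\tfrac14 n^2\le\tfrac12\binom{n}{2}$ is false as written (the right-hand side is $n(n-1)/4$); taking $k=16C_1$ and cutting the regimes at $r\le n/(16C_1)$ repairs this with room to spare and changes nothing else. Second, your deviations are genuine but are simplifications available only in the monochromatic setting: you may split on the value of $r$ directly because the richest line is automatically usable, whereas the paper must labor through Cases II--III of \lemref{extremes} to produce a rich line carrying many points of \emph{each} color; and your Cauchy--Schwarz count of connecting lines (via $\sum_\ell a_\ell=r(n-r)$ and $\sum_\ell\binom{a_\ell}{2}\le\binom{n-r}{2}$) replaces the paper's count that picks $K_2(2n-r)$ points off $\ell$ and subtracts $\binom{K_2(2n-r)}{2}$ overcounts. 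Both devices yield $\gg r(n-r)$, so either is acceptable.
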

Here $f(n)\gg g(n)$ means that $f(n)\ge Cg(n)$, for an absolute constant $C>0$.

In this note, we give an elaboration (using pretty much the same arguments)
of Beck's Theorem to bichromatic point
sets, which arises in relation to the work of Gutkin and Rams \cite{GR10} on the dynamics
of billiard orbits.  Let $\vec p$ be a set of $n$ red points and let $\vec q$
be a set of $n$ blue points with all points distinct (for a total of $2n$).
We define $\vec p\cup \vec q$ to be the \emph{bichromatic
point set $(\vec p,\vec q)$} and define the set of \emph{bichromatic induced lines}
$\mathcal{B}(\vec p,\vec q)$  to be the subset of $\mathcal{L}(\vec p,\vec q)$
that is incident on at least one point of each color.
\begin{theorem}[\mainthm][\textbf{Beck-type theorem for bichromatic point sets}]\theolab{main}
Let $(\vec p,\vec q)$ be a bichromatic point set with $n$ points in each color class (for a total of $2n$).
If the maximum richness of any line in $\mathcal{L}(\vec p,\vec q)$ is $r$,
then $\card{\mathcal{B}(\vec p,\vec q)}\gg n(2n-r)$.
\end{theorem}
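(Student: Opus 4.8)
The plan is to follow Beck's incidence-counting proof almost verbatim, using the Szemer\'edi--Trotter theorem as the only external input: $m$ points and $t$ lines have at most $O(m^{2/3}t^{2/3}+m+t)$ incidences, and consequently at most $O(m^2/k^3+m/k)$ of the lines are $k$-rich. I would split into two regimes, $r\le c_0 n$ and $r\ge c_0 n$, for a small absolute constant $c_0$ fixed at the end; the two regimes use the same tool, but in opposite directions.

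For small $r$, I would count bichromatic point pairs. Writing $a_\ell,b_\ell$ for the numbers of red and blue points on a line $\ell$, every red--blue pair lies on exactly one line, so $n^2=\sum_{\ell\in\mathcal B}a_\ell b_\ell$. Grouping the lines dyadically by richness and using $a_\ell b_\ell\le((a_\ell+b_\ell)/2)^2$ together with the Szemer\'edi--Trotter bound on $k$-rich lines, the bichromatic pairs lying on lines of richness at least $k_0$ total at most $O(n^2/k_0+nr)$. Choosing $k_0$ a large constant and $c_0$ small makes this at most $n^2/2$, so at least $n^2/2$ bichromatic pairs sit on lines of richness below $k_0$; since each such line carries fewer than $k_0^2/4$ of them, I get $\card{\mathcal B(\vec p,\vec q)}\gg n^2$, which beats $n(2n-r)\le 2n^2$.

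For large $r$, I would use the richest line. Let $\ell^\ast$ have $a$ red and $b$ blue points with $a+b=r$, and let $s=2n-r$ be the number of off-points. Joining each off-point to each $\ell^\ast$-point of the \emph{opposite} colour produces a bichromatic line meeting $\ell^\ast$ in a single point; the number of such (off-point, $\ell^\ast$-point) pairs is $(n-a)b+(n-b)a=nr-2ab\ge nr-r^2/2=\tfrac12 r(2n-r)$, using $ab\le r^2/4$. These pairs are incidences between the $s$ off-points and the bichromatic lines they determine, so if $T$ counts those lines, Szemer\'edi--Trotter forces $\tfrac12 rs\ll s^{2/3}T^{2/3}+s+T$. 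The linear-in-$s$ term would give $r=O(1)$ and is excluded once $r\ge c_0 n$, so $T\gg\min\{r^{3/2}s^{1/2},\,rs\}$; since $r\ge c_0 n$ and $s\ge n$ whenever $r\le n$, both candidates are $\gg n(2n-r)$, and $\card{\mathcal B(\vec p,\vec q)}\ge T$.

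The main obstacle is the large-$r$ regime. The delicate points are: extracting a \emph{lower} bound on the number of lines from Szemer\'edi--Trotter (the reverse of its usual use), which requires the incidence lower bound $\tfrac12 rs$ to genuinely dominate the $s$ and $T$ terms; the algebraic identity $nr-2ab$ for the number of opposite-colour connecting pairs, whose worst case $ab=r^2/4$ is exactly a balanced richest line; and verifying that these connecting lines are honestly distinct and bichromatic, so that they inject into $\mathcal B(\vec p,\vec q)$. Matching the constant $c_0$ across the two regimes, so that the small-$r$ and large-$r$ estimates overlap, is the final piece of bookkeeping.
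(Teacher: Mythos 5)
Your proposal is correct, and while its first half coincides with the paper's argument, the second half takes a genuinely different route. The small-$r$ regime is exactly the paper's \lemref{badpairs} together with Case I of \lemref{extremes}: dyadic grouping by richness, Szemer\'edi--Trotter to bound the pairs on medium-rich lines, and the observation that $n^2/2$ pairs carried by $O(1)$-rich lines force $\gg n^2$ bichromatic lines. Where you diverge is the rich-line case. The paper first proves a bichromatic two-extremes lemma (Cases II and III of \lemref{extremes}) to manufacture a single line carrying $\Omega(n)$ points of \emph{each} color, and then counts connecting lines from an off-line set $X$ directly, subtracting $\binom{|X|}{2}$ for coincidences. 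You instead work with the richest line as given --- possibly unbalanced or even monochromatic --- use the identity $(n-a)b+(n-b)a = nr-2ab \ge \tfrac12 r(2n-r)$ to lower-bound the opposite-color connecting pairs, and then apply Szemer\'edi--Trotter a second time, Beck-style, to convert $\tfrac12 rs$ incidences on the $s=2n-r$ off-points into $\gg \min\{r^{3/2}s^{1/2},\, rs\}$ distinct lines. This buys you something real: you never need a color-balanced rich line, so the paper's Case III (the two lines $\ell_1,\ell_2$) disappears entirely, at the cost of a second incidence-theorem application. One small point to tighten: in checking $r^{3/2}s^{1/2}\gg ns$, the inequality you actually need is $s\le 2n$ (giving $n^{3/2}s^{1/2}\ge ns/\sqrt{2}$ once $r\ge c_0 n$), not ``$s\ge n$ whenever $r\le n$'' --- as written that clause does not cover $r>n$, though the desired bound does hold throughout the large-$r$ regime.
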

In the particular case where $r=n$, which is required by Gutkin and Rams, this shows that
$\card{\mathcal{B}(\vec p,\vec q)}\gg n^2$.

Beck's Theorem \ref{theo:beck}, and the present \theoref{main},
may be deduced from the famous Szemeredi-Trotter Theorem on
point-line incidences (Beck himself uses a weaker, but similar, statement as his key lemma).
The following form is what we require in the sequel.
\begin{theorem}[\stthm][\textbf{Szemeredi-Trotter Theorem \cite{ST83}}]\theolab{sttheorem}
Let $\vec p$ be a set of $n$ points in $\mathbb{E}^2$ and let $\mathcal{L}$
be a finite set of lines in $\mathbb{E}^2$.  Then the number $r$ of $k$-rich lines
in $\mathcal{L}$ is $r\ll n^2/k^3 + n/k$.
\end{theorem}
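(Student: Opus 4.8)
The plan is to derive this from the crossing number inequality via Szekely's argument and then convert the resulting incidence bound into the stated count of $k$-rich lines. First I would establish the crossing number inequality: for a simple graph $G$ with $V$ vertices and $E \geq 4V$ edges, drawn in the plane, the number of edge crossings satisfies $\operatorname{cr}(G) \gg E^3/V^2$. The base case is the Euler-formula bound $\operatorname{cr}(G) \geq E - 3V + 6$, which holds for every simple graph because a planar graph has at most $3V - 6$ edges and each crossing can be destroyed by deleting one edge. To amplify this to the cubic bound I would keep each vertex independently with probability $p$ and apply the linear bound to the random induced subgraph; passing to expectations gives $p^4\operatorname{cr}(G) \geq p^2 E - 3pV$, and the choice $p \sim V/E$ (legitimate since $E \geq 4V$ forces $p \leq 1$) yields the claim.

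Next I would run Szekely's incidence argument. Given the point set $\vec p$ of size $n$ and any collection of $m$ lines, each incident on at least two points of $\vec p$, form the geometric graph $G$ whose vertices are the points of $\vec p$ and whose edges join consecutive points along each line; a line carrying $i$ points contributes $i-1$ edges, so the total number of edges is $E = I - m$, where $I$ is the number of point-line incidences. Since two distinct lines meet in at most one point, any two edges cross at most once and only when they lie on different lines, whence $\operatorname{cr}(G) \leq \binom{m}{2}$. Comparing this with the crossing number inequality when $E \geq 4n$, and using the trivial bound $I < 4n + m$ when $E < 4n$, and solving for $I$ gives the incidence estimate $I \ll n^{2/3}m^{2/3} + n + m$.

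Finally I would convert to the stated form. Since a $k$-rich line carries at least $k$ points of $\vec p$, if $r$ of the lines of $\mathcal{L}$ are $k$-rich then applying the incidence estimate to just those $r$ lines gives $rk \leq I \ll n^{2/3}r^{2/3} + n + r$. For $k \geq 2$ the additive $r$ is absorbed into the left-hand side, and a short case analysis on which of the two surviving terms dominates yields either $r \ll n^2/k^3$ or $r \ll n/k$; summing the two alternatives covers both cases and produces $r \ll n^2/k^3 + n/k$, as required.

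I expect the main obstacle to be the probabilistic amplification in the crossing number inequality. The linear Euler bound by itself is far too weak for Szemeredi-Trotter, and it is precisely the random-sampling step, together with the correct choice of retention probability $p \sim V/E$, that converts the linear estimate into the sharp cubic growth on which the entire theorem rests; the subsequent graph construction and the final case analysis are then essentially bookkeeping.
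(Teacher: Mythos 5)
The paper does not prove this theorem at all: it is quoted as a black box from Szemer\'edi--Trotter \cite{ST83}, so there is no internal proof to compare against. What you have written is a correct, self-contained proof, and it is Sz\'ekely's crossing-number proof \cite{S97} --- which the paper itself gestures at in its conclusion --- rather than the original cell-decomposition argument of \cite{ST83}. All the steps check out: the amplification of $\operatorname{cr}(G)\ge E-3V$ to $\operatorname{cr}(G)\gg E^3/V^2$ is legitimate provided you fix an optimal drawing, delete vertices independently with probability $1-p$, and use that the restricted drawing has expected $p^4\operatorname{cr}(G)$ crossings while $\operatorname{cr}(H)\ge E(H)-3V(H)$ holds pointwise; the choice $p\sim V/E$ is valid since $E\ge 4V$. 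In the incidence step the graph is simple (two points determine a unique line, so an edge arises from only one line), edges on the same line are disjoint segments, and two distinct lines meet at most once, giving $\operatorname{cr}(G)\le\binom{m}{2}$ and hence $I\ll n^{2/3}m^{2/3}+n+m$ after the $E<4n$ fallback. The conversion $rk\le I$ plus the two-case analysis then yields $r\ll n^2/k^3+n/k$ exactly as you say.

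One caveat you should make explicit: your absorption of the additive $r$ requires $k\ge 2$, and the theorem as literally stated is false for $k=1$ (take $n=1$ point and an arbitrarily large finite set of lines through it, so $r$ is unbounded while $n^2/k^3+n/k$ is not). This is harmless for the paper, which only invokes the bound with $k=2^j\ge 2$ in the proof of its Lemma on bad pairs, but a careful writeup should state the hypothesis $k\ge 2$. As a point of comparison, Sz\'ekely's route buys a short proof with good constants from one combinatorial inequality, whereas the original \cite{ST83} argument via decompositions is more technical but is the ancestor of the partitioning methods that generalize to other incidence settings.
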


\subsection*{Notations.}  We use $\vec p = (\vec p_i)_1^n$ and $\vec q = (\vec q_i)_1^n$
for point sets in $\mathbb{E}^2$.  The notation $f(n)\gg Cg(n)$ means there is an
absolute constant $C>0$ such that $f(n) \ge g(n)$ for all $n\in \mathbb{N}$.

\section{Proofs}
The proof of the main theorem follows a similar line to Beck's original proof.  For a
pair of points $(\vec p_i,\vec q_j)$, we define the richness of the pair to be
the richness of the line $\vec p_i\vec q_j$.

\begin{lemma}\lemlab{badpairs}
Let $(\vec p,\vec q)$ be a bichromatic point set in $\mathbb{E}^2$.  Then
there is an absolute constant $K_1>0$ such that the number of bichromatic
point pairs that are either at most $1/K_1$-rich or at least $K_1n$-rich is
at least $n^2/2$.
\end{lemma}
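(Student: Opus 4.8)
The plan is to bound the number of \emph{middle} pairs---those whose richness lies strictly between the two thresholds $1/K_1$ and $K_1 n$---and show this number is at most $n^2/2$. Since there are exactly $n^2$ bichromatic pairs in total (one for each choice of a red point and a blue point), this leaves at least $n^2/2$ pairs in the two extreme ranges, which is what the lemma asserts. The main tool will be a dyadic decomposition of the middle band by richness, combined with \theoref{sttheorem} applied to the full set of $2n$ points.

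First I would record the per-line contribution. A line $\ell$ carrying $a_\ell$ red and $b_\ell$ blue points is the line of exactly $a_\ell b_\ell$ bichromatic pairs, and since two points determine a unique line these pairs are counted without overlap across distinct lines. By the arithmetic--geometric mean inequality, $a_\ell b_\ell \le (a_\ell + b_\ell)^2/4$, so a line of richness $k$ supports at most $k^2/4$ bichromatic pairs. Thus the number of middle pairs is at most $\tfrac14\sum_\ell k_\ell^2$, where $k_\ell$ denotes the richness and the sum ranges over lines whose richness lies in $(1/K_1, K_1 n)$.

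Next I would split this richness range into dyadic bands $[2^j, 2^{j+1})$. Applying \theoref{sttheorem} with $N = 2n$ and $k = 2^j$ bounds the number of lines of richness at least $2^j$ by $\ll n^2/2^{3j} + n/2^j$, which also bounds the number of lines falling in band $j$. Multiplying this by the at most $2^{2j}$ bichromatic pairs a line in band $j$ can support yields $\ll n^2/2^j + n\,2^j$ middle pairs from that band. Summing over the bands with $1/K_1 \le 2^j \le K_1 n$, the first term forms a geometric series dominated by its smallest index, contributing $\ll K_1 n^2$, and the second a geometric series dominated by its largest index, also contributing $\ll K_1 n^2$. Hence the total number of middle pairs is $\ll K_1 n^2$.

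Finally, choosing $K_1$ to be a sufficiently small absolute constant---small enough that the implied constant times $K_1$ is at most $1/2$---forces the middle count below $n^2/2$ and completes the proof; when $1/K_1 > K_1 n$ (i.e.\ for small $n$) the middle band is empty and all $n^2$ pairs are extreme, so the bound is immediate. The point requiring care is arranging the two thresholds so that \emph{both} tails of the dyadic sum are governed by the single parameter $K_1$: the low threshold $1/K_1$ must be a large enough constant to tame the $\sum n^2/2^j$ tail, while the high threshold $K_1 n$ must be a small enough fraction of $n$ to tame the $\sum n\,2^j$ tail. Balancing these two constraints is the crux, and it is exactly what pins down the admissible range of $K_1$.
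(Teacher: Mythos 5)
Your proof is correct and takes essentially the same route as the paper: a dyadic decomposition of the middle richness range, the Szemeredi-Trotter bound applied to each band, a per-line count of at most $2^{2j}$ bichromatic pairs, and two geometric series both controlled by taking $K_1$ sufficiently small. If anything, your write-up is a bit more careful than the paper's (you use $a_\ell b_\ell \le k_\ell^2/4$ explicitly and keep the summation range consistently in terms of the richness $2^j$ rather than the index $j$), but the argument is the same.
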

\begin{proof}
There are exactly $n^2$ pairs of points $(\vec p_i,\vec q_j)$, and each
of these induces a line in $\mathcal{B}{(\vec p,\vec q)}$.  Define the
subset $\mathcal{B}_j{(\vec p,\vec q)}\subset \mathcal{B}{(\vec p,\vec q)}$
to be the set of bichromatic lines with richness between $2^{j-1}$ and $2^{j}$.

By the Szemeredi-Trotter Theorem with $k=2^{j}$,
\begin{equation}\eqlab{sizebj}
\card{\mathcal{B}_j{(\vec p,\vec q)}} \le C(n^2/2^{3j} + n/2^{j})
\end{equation}
The number of bichromatic pairs inducing any line $\ell\in \mathcal{B}_j{(\vec p,\vec q)}$ is maximized when there are
$2^j$ red points and $2^j$ blue ones on $\ell$, for a
total of $2^{2j}$ bichromatic pairs.  Multiplying by the estimate of \eqref{sizebj},
the number of bichromatic pairs inducing lines in $\mathcal{B}_j{(\vec p,\vec q)}$ is
at most
\begin{equation}\eqlab{pairsbj}
C(n^2/{2^j} + n2^j)
\end{equation}
for a large absolute constant $C$ coming from the Szemeredi-Trotter Theorem.

Now let $K_1>0$ be a small constant to be selected later.  We sum \eqref{pairsbj} over
$j$ such that $1/K_1\le j\le K_1{n}$:
\[
C\left(n^2\sum_{1/K_1\le j\le K_1{n}}2^{-j} +
n \sum_{1/K_1\le j\le K_1{n}} 2^{j}\right) \le
C\left(n^2{2^{-1/K_1}} + n\cdot {2K_1{n}}\right)
\]
Picking $K_1$ small enough (it depends on $C$) ensures that at most $n^2/2$ of the monochromatic pairs induce lines with richness between $1/K_1$ and $K_1 n$.
\end{proof}

The following lemma is a bichromatic variant of Beck's Two-Extremes Theorem.
\begin{lemma}\lemlab{extremes}
Let $(\vec p,\vec q)$ be a bichromatic point set in $\mathbb{E}^2$.  Then
either
\begin{itemize}
\item[\textbf{A}] The number of bichromatic lines $\card{\mathcal{B}{(\vec p,\vec q)}}\gg n^2$.
\item[\textbf{B}] There  is a line $\ell\in \mathcal{B}{(\vec p,\vec q)}$ incident on at least $K_2 n$ red points
and $K_2 n$ blue points for an absolute constant $K_2>0$.
\end{itemize}
\end{lemma}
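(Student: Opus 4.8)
The plan is to run a case analysis driven by the conclusion of \lemref{badpairs}. That lemma guarantees that at least $n^2/2$ of the $n^2$ bichromatic pairs have richness either at most the constant $1/K_1$ (call these \emph{poor} pairs) or at least $K_1 n$ (call these \emph{rich} pairs). By pigeonhole one of these two families accounts for at least $n^2/4$ pairs, and I would treat each possibility separately, aiming for alternative \textbf{A} when the poor pairs dominate and for alternative \textbf{B} when the rich pairs dominate.

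Suppose first that at least $n^2/4$ pairs are poor, i.e.\ lie on bichromatic lines of richness at most $1/K_1$. A single line of richness at most $1/K_1$ supports at most $1/K_1$ points in total, hence at most $(1/K_1)^2$ bichromatic pairs. Therefore the number of \emph{distinct} lines needed to carry $n^2/4$ poor pairs is at least $(n^2/4)K_1^2 = \Omega(n^2)$, and since each such line is incident on at least one point of each color it belongs to $\mathcal{B}(\vec p,\vec q)$. This yields alternative \textbf{A}.

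Now suppose instead that at least $n^2/4$ pairs are rich, lying on lines of richness at least $K_1 n$. Applying \theoref{sttheorem} with $k = K_1 n$ bounds the number of such lines by $\ll n^2/(K_1 n)^3 + n/(K_1 n) = O(1/K_1)$, a constant independent of $n$. These $O(1/K_1)$ lines together support at least $n^2/4$ bichromatic pairs, so by a second pigeonhole one line $\ell$ supports $\Omega(n^2)$ of them. Writing $a$ for the number of red and $b$ for the number of blue points on $\ell$, we have $ab = \Omega(n^2)$ while $a,b \le n$; dividing through gives $a \ge ab/n = \Omega(n)$ and symmetrically $b = \Omega(n)$. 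Choosing $K_2$ to be the resulting constant produces a line with at least $K_2 n$ red and $K_2 n$ blue points, which is alternative \textbf{B}.

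The two pigeonhole steps and the per-line pair count are routine; the crux lives in the rich case. The bichromatic strengthening over Beck's original argument is concentrated there, because Szemer\'edi--Trotter only bounds the \emph{total} point count on a rich line, whereas alternative \textbf{B} demands an abundance of \emph{both} colors. The elementary observation that $ab = \Omega(n^2)$ together with $a,b \le n$ forces each of $a$ and $b$ to be $\Omega(n)$ is exactly what bridges this gap, and it is the step I would be most careful about. I would also track how $K_2$ depends on $K_1$ and on the Szemer\'edi--Trotter constant, so that all constants are fixed consistently with \lemref{badpairs}.
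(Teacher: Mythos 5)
Your proof is correct, but in the key case it takes a genuinely different and arguably cleaner route than the paper. The poor-pair case is essentially identical to the paper's Case I: quadratically many pairs on lines carrying at most $(1/K_1)^2$ pairs each forces $\Omega(n^2)$ distinct bichromatic lines. The divergence is in the rich case. The paper splits it further: if some very rich line has at least $\frac{K_1}{6}n$ points of each color it declares alternative \textbf{B} (Case II), and otherwise (Case III) it argues that every very rich line is color-unbalanced, extracts one line heavy in red and a distinct line heavy in blue, and falls back to alternative \textbf{A} via the $(\frac{5}{6}K_1 n-1)^2$ connecting lines. You instead show that the rich case always lands in alternative \textbf{B}: Szemer\'edi--Trotter at richness $K_1 n$ caps the number of very rich lines at a constant, so one such line $\ell$ carries $\Omega(n^2)$ bichromatic pairs, and since a line with $a$ red and $b$ blue points carries exactly $ab$ pairs, $ab=\Omega(n^2)$ together with $a,b\le n$ forces $a,b=\Omega(n)$. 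This makes the paper's Case III superfluous (at the cost of a possibly smaller $K_2$ than the paper's $\frac{K_1}{6}$, which is harmless since $K_2$ need only be an absolute constant), and your $O(1)$ bound on the number of $K_1 n$-rich lines is sharper than the paper's crude bound of $n/K_1$; indeed it even follows from elementary pair-counting without Szemer\'edi--Trotter, since distinct lines share at most one point. Both proofs are valid; yours is shorter and isolates the one genuinely bichromatic step, namely deducing $a,b=\Omega(n)$ from $ab=\Omega(n^2)$.
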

\begin{proof}
We partition the bichromatic pairs $(\vec p_i,\vec q_j)$ into three sets: $L$ is the
set of pairs with richness less than $1/K_1$; $M$ is the set of pairs with richness
in the interval $[1/K_1,K_1 n]$; $H$ is the set of pairs with richness greater
than $K_1 n$.

By \lemref{badpairs}, $\card{L\cup H}\ge n^2/2$.  There are now three cases:

{\bf Case I: (Alternative A)} If $\card{L}\ge n^2/4$, then we
are in alternative \textbf{A}, since quadratically many pairs can be covered only by quadratically many lines of constant richness.

{\bf Case II: (Alternative B)}
If we are not in Case I, then, $\card{H}\ge n^2/4$.  In particular, since $H$ is not
empty, there are lines incident on at least one point of each color and
at least $K_1 n$ points in total.  If one of these lines is line incident to at least
$\frac{K_1}{6}n$ points of each color, then we are in alternative \textbf{B}.

\textbf{Case III: (Alternative A)}
If we are not in Case I or Case II, then every line induced by a
bichromatic pair in $H$ has at least $\frac{5}{6}K_1{n}$ red points
incident on it or $\frac{5}{6}K_1{n}$ blue ones incident on it.

Since there are $\card{H}\ge n^2/4$ bichromatic point pairs incident on a
very rich line, there must be at least $n/4$ different points of each color
participating in some point pair in $H$.

Each line induced by a pair in $H$ generates at least $K_1{n}$ incidences,
so the number of these lines is at most $\frac{1}{K_1}n$.  But then if all the
lines induced by $H$ span at most $\frac{1}{6}K_1{n}$ blue points, the total number of
blue incidences is less than $n/4$, which is a contradiction.  We can make a similar
argument for red points.

Thus there is a line $\ell_1$ spanning at least $\frac{5}{6}K_1{n}$ blue
points and a distinct line $\ell_2$ spanning at least $\frac{5}{6}K_1{n}$
red points.  From this configuration we get at least $(\frac{5}{6}K_1{n}-1)^2$
distinct bichromatic lines, putting us again in alternative \textbf{A}.
\end{proof}

\begin{proof}[Proof of \theoref{main}]
If alternative \textbf{A} of \lemref{extremes} holds, then we are already done.

If we are in alternative \textbf{B}, then there must be a line
$\ell$ of richness $r\ge 2 K_2{n}$ incident to at least $K_2{n}$
points of each color.  Now pick any subset $X$
of $K_2(2n-r)$ points not incident to $\ell$. There are at least
$\frac{1}{2}K^2_2{n}(2n-r)$ bichromatic point pairs
determined by one point in $X$ and one point incident to $\ell$.  Thus
we get at least
\[
\frac{1}{2}K^2_2{n}(2n-r) - \binom{K_2(2n-r)}{2} \ge
\frac{1}{2} K^3_2{n}(2n-r)
\]
bichromatic lines.
\end{proof}

\section{Conclusion}
We proved an extension of Beck's Theorem \cite{B83} to bichromatic point sets using a
fairly standard argument, completing the combinatorial step in Gutkin-Rams's recent
paper on billiards.

This kind of bichromatic result can, due to the general nature of the proofs, be extended
to any setting where a Szemeredi-Trotter-type result is available (see, e.g., \cite{S97}
for many examples).  Moreover, by ``forgetting'' colors and repeatedly squaring the constants,
\theoref{main} holds for bichromatic lines in multi-chromatic point sets.  It would, however,
be interesting to know whether this is the correct order of growth for the constants
in a multi-chromatic version of \theoref{main}.

\end{document}